\documentclass[12pt]{article}

\usepackage{empheq}
\usepackage{amsmath, amsthm}
\usepackage{amssymb}
\usepackage{amsfonts}
\usepackage{latexsym}
\usepackage{bm}
\numberwithin{equation}{section}

\usepackage{graphicx}
\usepackage{epsfig}
\usepackage{epstopdf}
\usepackage{subfigure}
\usepackage{overpic}
\usepackage{tikz}
\usepackage{float}
\usepackage{multirow}
\usepackage{tabularx}
\usepackage{booktabs}

\usepackage{color}

\usepackage[numbers, sort&compress]{natbib}
\usepackage[colorlinks=true]{hyperref}
\hypersetup{
    linkcolor = blue,
    citecolor = blue,
    urlcolor  = cyan,
    filecolor = blue,
}
\usepackage[capitalize, noabbrev]{cleveref}

\usepackage{indentfirst}
\usepackage{pifont}
\usepackage{titlesec}
\usepackage{enumitem}

\titleformat{\section}
  {\normalfont\Large\bfseries}{\thesection.}{1em}{}
\titleformat{\subsection}
  {\normalfont\large\bfseries}{\thesubsection.}{1em}{}
\titleformat{\subsubsection}
  {\normalfont\normalsize\bfseries}{\thesubsubsection.}{1em}{}

\newlist{assumptions}{enumerate}{1}
\setlist[assumptions]{
    label      = (\textit{A}\textsubscript{\arabic*}),
    ref        = (\textit{A}\textsubscript{\arabic*}),
    leftmargin = *,
    align      = left,
    itemindent = 0pt,
}

\newtheorem{theorem}{Theorem}[section]

\newtheorem{definition}[theorem]{Definition}
\newtheorem{example}[theorem]{Example}
\newtheorem{lemma}[theorem]{Lemma}
\newtheorem{proposition}[theorem]{Proposition}

\newtheoremstyle{problemstyle}
  {6pt}{6pt}
  {\normalfont}{}
  {\scshape}{.}
  { }
  {\thmname{#1}\thmnote{ (#3)}}

\theoremstyle{problemstyle}
\newtheorem*{problem}{Problem}

\renewenvironment{proof}[1][Proof]{\textbf{#1.} }
  {\ \rule{0.75em}{0.75em}\smallskip}

\begin{document}

\begin{center}
    {\Large\bfseries A Finite Element Method for Elliptic Hemivariational Inequalities
    in Non-isotropic and Heterogeneous Semipermeable Media}
\end{center}

\begin{center}
    \large {\sc Ban Li}\footnote{College of Mathematics and Systems Science,
        Xinjiang University, Urumqi, Xinjiang 830017, PR China.
        Email: liban1228@163.com},
    \quad
    \large {\sc Bangmin Wu}\footnote{College of Mathematics and Systems Science,
        Xinjiang University, Urumqi, Xinjiang 830017, PR China. The work of this author was partially supported by the Natural Science Foundation of Xinjiang Uygur (Grant No.~2024D01C227) and the Science and Technology Program of Xinjiang Uyghur (Grant No.~2025A03011-3).
        \textsuperscript{*} Corresponding author: bmwu\_math@xju.edu.cn}\textsuperscript{,*}
\end{center}

\begin{quote}
    {\bf Abstract.}
    This work investigates finite element approximations for a general class of elliptic hemivariational inequalities arising in semipermeable media. The proposed model incorporates non-isotropic and heterogeneous diffusion coefficients, alongside both interior and boundary semipermeability terms, extending the isotropic and homogeneous framework examined by Han (2019). The existence and uniqueness of solutions are rigorously established. An optimal a priori error estimate for the linear finite element approximation is derived under appropriate solution regularity assumptions. Numerical experiments are presented to corroborate the theoretical analysis and to confirm the optimal convergence rates for the non-isotropic and heterogeneous case.

    {\bf Keywords.}  Hemivariational inequalities, Non-isotropic and heterogeneous semipermeable media, Finite element method, Error estimates, Numerical simulation.

    {\bf AMS Classification.} 65N30, 49J40
\end{quote}

\section{Introduction}

Problems involving semipermeable media often arise in the context of porous media flow, where semipermeability may occur both in the interior and on the boundary. Such problems were initially investigated in \cite{Duvaut1972Les} under a monotone semipermeable relation, leading to formulations as variational inequalities. However, due to their reliance on monotonicity and convexity, variational inequalities are limited in capturing nonsmooth and nonconvex physical processes. To overcome this limitation, Panagiotopoulos \cite{Panagiotopoulos983nonconvex} introduced hemivariational inequalities in 1983, based on Clarke’s generalized gradient for locally Lipschitz functionals \cite{Clarke1975gen}. This framework was subsequently applied by Panagiotopoulos \cite{Panagiotopoulos1985semipermeable} to extend semipermeable media problems to nonmonotone relations. Because hemivariational inequalities can rigorously describe the behavior of energy functionals at non-differentiable points, their mathematical theory and numerical methods have since developed rapidly \cite{migorski2013non, Sofonea2018var}.

The numerical analysis of hemivariational inequalities has emerged as a significant area of research in recent years. For the finite element discretization of such problems, \cite{Haslinger1999fem} offers a comprehensive treatment and introduces several numerical algorithms. The work in \cite{Han2014Aclass} rigorously establishes the convergence of the linear finite element method for hemivariational inequalities and derives optimal-order error estimates for the numerical solution. This has been followed by a series of subsequent studies dedicated to obtaining optimal-order error estimates for linear finite element approximations applied to a diverse range of hemivariational inequalities \cite{Han2019contact, Han2017elliptic}.

For the semipermeable media problem, Han \cite{han2019media} established the well-posedness and derived optimal finite element error estimates for the isotropic and homogeneous case. While the extension to non-isotropic and heterogeneous media was noted, a detailed analysis and corresponding numerical simulations were not provided, which motivates the present study. Subsequently, Han \cite{han2021numericalparabolic} developed a fully discrete numerical scheme with optimal error estimates for parabolic hemivariational inequalities in semipermeable media. Wang \cite{wang2020discontinuous} introduced a discontinuous Galerkin method for an elliptic hemivariational inequality in such media and established a priori error estimates. In summary, although the theoretical and numerical analysis of hemivariational inequalities for semipermeable media has advanced considerably in recent years, the majority of existing work remains focused on isotropic and homogeneous settings.

However, real materials are intrinsically heterogeneous. Consequently, extending the analysis to incorporate variable coefficients is crucial for accurate physical modeling. The inclusion of non-isotropic and heterogeneous coefficients necessitates a corresponding generalization of the well-posedness and error analysis. The principal contribution of this work is to generalize the finite element analysis presented in \cite{han2019media} to a non-isotropic, heterogeneous elliptic operator. Specifically, we establish the well-posedness of the resulting hemivariational inequality and derive an optimal first-order error estimate for its linear finite element discretization. Numerical experiments are provided to confirm the theoretical convergence rate and to demonstrate the efficacy of the proposed method for simulating non-isotropic and heterogeneous media.

This paper is organized as follows. In Section \ref{sec:Hemivariational inequality}, essential preliminary material is introduced, the model problem is formulated, and the corresponding hemivariational inequality is derived. Section \ref{sec:AnalysisHemivariational Inequality} is devoted to establishing the existence and uniqueness of solutions for the non-isotropic and heterogeneous semipermeable media problem. In Section \ref{sec:fem}, a linear finite element discretization is proposed, the uniform boundedness of the discrete solutions is proven, and an optimal error estimate in the $H^1$ norm is established. Finally, Section \ref{sec:numerical example} presents numerical experiments that validate the derived theoretical error estimates.

\section{Model problem for non-isotropic and heterogeneous semipermeable media}\label{sec:Hemivariational inequality}

\subsection{Preliminaries}
Throughout, all linear spaces are real and we use the following notation. Let $X$ be a normed space with norm $\|\cdot\|_X$. The duality pairing between $X^*$ (the topological dual of $X$) and $X$ is denoted by $\langle\cdot,\cdot\rangle$. Let $0_X$ be the zero element of $X$ and $2^{X^*}$ the set of all subsets of $X^*$. Unless stated otherwise, $X$ is a Banach space. For two normed spaces $X$ and $W$, $\mathcal{L}(X,W)$ denotes the space of all continuous linear operators from $X$ to $W$.

\begin{definition}
	Let $\psi: X \to \mathbb{R}$ be a locally Lipschitz functional. Its generalized (Clarke) directional derivative at $x\in X$ in the direction $v\in X$ is defined by
	\begin{equation}\nonumber\label{phi0}
		\psi^0(x;v) := \limsup_{y\to x,\ \lambda\downarrow 0} \frac{\psi(y+\lambda v) - \psi(y)}{\lambda}.
	\end{equation}
	The generalized gradient (subdifferential) of $\psi$ at $x$ is
	\begin{equation}\nonumber\label{partialpsi0}
		\partial\psi(x) := \{\,\zeta\in X^* \mid \psi^0(x;v) \ge \langle\zeta,v\rangle\ \ \forall v\in X\,\}.
	\end{equation}
\end{definition}

We refer to \cite{clarke1990optim,naniewicz1995Hemivar} for properties of the Clarke subdifferential and generalized directional derivative. The latter is subadditive:
\begin{proposition}\label{pro:Jsub}
	Let $\psi:X\to\mathbb{R}$ be locally Lipschitz. For every $u\in X$, the map $v\mapsto \psi^0(u;v)$ is subadditive, i.e.,
	\begin{equation}\nonumber\label{subadd}
		\psi^0(u;v_1+v_2) \le \psi^0(u;v_1) + \psi^0(u;v_2) \qquad \forall\,v_1,v_2\in X.
	\end{equation}
\end{proposition}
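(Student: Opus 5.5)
The plan is to work directly from the definition of $\psi^0(u;\cdot)$ as a $\limsup$. The idea is to split the difference quotient in direction $v_1+v_2$ into two quotients, one in each direction, by inserting an intermediate point. First we note that $\psi^0(u;v)$ is finite for every $v$. Indeed, $\psi$ is Lipschitz with some constant $L$ on a neighbourhood of $u$, so for $y$ near $u$ and $\lambda$ small the quotient $(\psi(y+\lambda v)-\psi(y))/\lambda$ is bounded in absolute value by $L\|v\|_X$. Hence $|\psi^0(u;v)|\le L\|v\|_X$. This ensures that adding two $\limsup$'s below never produces an indeterminate form $\infty-\infty$.

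For $y\in X$ and $\lambda>0$ we write
\begin{equation*}
\frac{\psi(y+\lambda(v_1+v_2))-\psi(y)}{\lambda}
=\frac{\psi\bigl((y+\lambda v_2)+\lambda v_1\bigr)-\psi(y+\lambda v_2)}{\lambda}
+\frac{\psi(y+\lambda v_2)-\psi(y)}{\lambda}.
\end{equation*}
We then take the $\limsup$ as $y\to u$ and $\lambda\downarrow 0$. The $\limsup$ of a sum is at most the sum of the $\limsup$'s, which is legitimate here since both are finite. The second term has $\limsup$ exactly $\psi^0(u;v_2)$ by definition.

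For the first term we set $z:=y+\lambda v_2$. As $y\to u$ and $\lambda\downarrow0$ we have $z\to u$, so the first quotient is of the form $(\psi(z+\lambda v_1)-\psi(z))/\lambda$ with $z\to u$ and $\lambda\downarrow 0$. Its $\limsup$ is therefore bounded above by the supremum over this larger family of base points, namely $\psi^0(u;v_1)$. Combining the two bounds gives
\begin{equation*}
\psi^0(u;v_1+v_2)\le\psi^0(u;v_1)+\psi^0(u;v_2).
\end{equation*}

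The only delicate point is the justification in the previous paragraph. We must check that the reparametrized family $(z,\lambda)$ is admissible in the definition of $\psi^0(u;v_1)$, i.e. that $z\to u$ jointly with $\lambda\downarrow0$. This holds because $\|z-u\|_X\le\|y-u\|_X+\lambda\|v_2\|_X$. I would make this precise with sequences: choose $(y_n,\lambda_n)$ realizing the $\limsup$ on the left-hand side, split as above, and bound each piece by the corresponding $\limsup$.
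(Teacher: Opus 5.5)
Your argument is correct and is the standard proof from the literature: insert the intermediate point $y+\lambda v_2$, observe that it still tends to $u$ jointly with $\lambda\downarrow 0$, and use the local Lipschitz bound $|\psi^0(u;\cdot)|\le L\|\cdot\|_X$ to split the $\limsup$ legitimately. The paper gives no proof of its own and simply defers to the cited monographs of Clarke and of Naniewicz--Panagiotopoulos, where essentially this argument appears, so there is no different route to compare against.
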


\subsection{Model problem}
For non-isotropic and heterogeneous semipermeable media, both boundary and interior semipermeability conditions are typically involved.
Let $\Omega\subset\mathbb{R}^d$ be an open, bounded, connected domain with a Lipschitz boundary $\partial\Omega$, partitioned into two disjoint parts $\Gamma_S$ and $\Gamma_D$ such that $\operatorname{meas}(\Gamma_D)>0$.
Consider the elliptic operator
\begin{equation}\label{Lu}
	Lu = -\sum_{i,j=1}^d \frac{\partial}{\partial x_j}\Bigl(a_{ij}\frac{\partial u}{\partial x_i}\Bigr) + a_0 u,
\end{equation}
whose coefficients satisfy \cite[Subsection 8.4.5]{atkin2009theo}
\begin{align}
	a_{ij}, a_0 &\in L^\infty(\Omega), \label{eq:aija0} \\
	\sum_{i,j=1}^d a_{ij}\xi_i\xi_j &\ge \theta\,\lvert\bm{\xi}\rvert^2 \quad \forall\bm{\xi}\in\mathbb{R}^d,\;\text{a.e. in }\Omega, \label{eq:xi} \\
	a_0 &\ge 0 \quad \text{a.e. in }\Omega, \label{eq:a0}
\end{align}
with a constant $\theta>0$.
The co-normal derivative on the boundary is defined as
\begin{equation}\nonumber\label{nonnumann}
	\frac{\partial u}{\partial\nu_L} = \sum_{i,j=1}^d a_{ij}\,\frac{\partial u}{\partial x_i}\,\nu_j .
\end{equation}
Let $f_0\in L^2(\Omega)$ and let the interior semipermeability be encoded by the subdifferential relation $-f_1\in\partial j_1(u)$.
The pointwise formulation of the model problem then reads
\begin{align}
	Lu &= f_0 + f_1 && \text{in } \Omega, \label{model} \\
	u &= 0 && \text{on } \Gamma_D, \label{drichlet} \\
	-\frac{\partial u}{\partial \nu_L} &\in \partial j_2(u) \qquad && \text{on } \Gamma_S. \label{neumann}
\end{align}
Here $\partial j_1$ and $\partial j_2$ denote the Clarke subdifferentials of locally Lipschitz functionals $j_1$ and $j_2$, representing the interior and boundary semipermeability conditions, respectively.

The functions $j_i$ ($i=1,2$) are assumed to satisfy the following conditions.

\begin{subequations}\label{eq:main}
	\textbf{H($j$).}\quad
	\begin{empheq}[left=\empheqlbrace]{align}
		& j_1 \text{ and } j_2 \text{ are locally Lipschitz continuous;} \label{eq:cond:a} \\
		& \text{there exist constants } \bar{c}_{i,0},\bar{c}_{i,1} \text{ such that} \nonumber \\
		&\qquad |\partial j_i(t)| \le \bar{c}_{i,0} + \bar{c}_{i,1}|t| \quad \forall\,t\in\mathbb{R},\ i=1,2; \label{eq:cond:b} \\
		& \text{there exist constants } \alpha_i \text{ such that} \nonumber \\
		&\qquad j_i^0(t_1;t_2-t_1) + j_i^0(t_2;t_1-t_2) \le \alpha_i|t_1-t_2|^2 \quad \forall\,t_1,t_2\in\mathbb{R},\ i=1,2. \label{eq:cond:c}
	\end{empheq}
\end{subequations}

\subsection{Hemivariational inequality}
The weak formulation of (\ref{model})--(\ref{neumann}) is posed in the space
\begin{equation}\nonumber\label{Vspace}
	V = \{ v \in H^1(\Omega) \mid v = 0 \text{ on } \Gamma_D \},
\end{equation}
endowed with the norm \(\|v\|_{V} = \|\nabla v\|_{L^2(\Omega)}\), which is equivalent to the standard \(H^1\)-norm on \(V\) by the Poincaré inequality. The corresponding bilinear form is
\begin{equation}\nonumber\label{auv}
	a(u,v) = \int_\Omega \Bigl( \sum_{i,j=1}^d a_{ij} \frac{\partial u}{\partial x_i} \frac{\partial v}{\partial x_j} + a_0 uv \Bigr) \, dx, \qquad u,v\in V.
\end{equation}
It can be shown that \(a(\cdot,\cdot)\) is bounded and coercive on \(V\). Indeed, by the Cauchy–Schwarz inequality, the boundedness of the coefficients and the Poincaré inequality we obtain,
\begin{equation}\label{bounded}
	\begin{aligned}
		|a(u,v)|
		&\le \sum_{i,j=1}^d \|a_{ij}\|_{L^\infty(\Omega)} \Big\| \frac{\partial u}{\partial x_i} \Big\|_{L^2(\Omega)} \Big\| \frac{\partial v}{\partial x_j} \Big\|_{L^2(\Omega)}
		+ \|a_0\|_{L^\infty(\Omega)} \|u\|_{L^2(\Omega)} \|v\|_{L^2(\Omega)} \\
		&\le c_a \|u\|_V \|v\|_V,
	\end{aligned}
\end{equation}
where \(c_a\) depends on \(\|a_{ij}\|_{L^\infty(\Omega)}\) and \(\|a_0\|_{L^\infty(\Omega)}\). Using \eqref{eq:xi} and \eqref{eq:a0}, the coercivity follows:
\begin{equation}\label{coercive}
	a(v,v) \ge \theta \|\nabla v\|_{L^2(\Omega)}^2 = \theta \|v\|_V^2.
\end{equation}

Multiplying \eqref{model} by a test function \(v\in V\) and applying Green's formula yields
\begin{equation}\nonumber
	a(u,v) + \int_{\Gamma_S} \Bigl( -\frac{\partial u}{\partial\nu_L} \Bigr) v \, ds + \int_\Omega (-f_1) v \, dx = \int_\Omega f_0 v \, dx \qquad \forall\, v\in V.
\end{equation}
Because \(-f_1 \in \partial j_1(u)\) and \(-\frac{\partial u}{\partial\nu_L} \in \partial j_2(u)\), the definition of the Clarke subdifferential gives the pointwise bounds
\((-f_1)v \le j_1^0(u;v)\) a.e. in \(\Omega\) and \(-\frac{\partial u}{\partial\nu_L} v \le j_2^0(u;v)\) on \(\Gamma_S\). Inserting these into the weak formulation leads to the following hemivariational inequality.
\begin{problem}[\(P_m\)]\label{modelproblemhvi}
	Find \(u\in V\) such that
	\begin{equation}\label{HVI1}
		a(u,v) + \int_\Omega j_1^0(u;v) \, dx + \int_{\Gamma_S} j_2^0(u;v) \, ds \ge \int_\Omega f_0 v \, dx \qquad \forall\, v\in V.
	\end{equation}
\end{problem}

\section{Analysis of the hemivariational inequality}\label{sec:AnalysisHemivariational Inequality}

To establish the well-posedness of the non-isotropic and heterogeneous semipermeable media problem $(P_m)$, we invoke the following abstract result, which specializes \cite[Theorem 3.1]{han2019media} with $K=X$.
\begin{theorem}\label{th:generalth}
	Let $X$ be a reflexive Banach space and assume:
	\begin{assumptions}
		\item \label{ass:A1} For $i=1,2$, $X_i$ is a Banach space and $\gamma_i\in\mathcal{L}(X,X_i)$ satisfies, for some $c_i>0$,
		\begin{equation}\nonumber\label{gammai}
			\|\gamma_i v\|_{X_i}\le c_i\|v\|_X\qquad\forall v\in X.
		\end{equation}
		\item \label{ass:A2} $A:X\to X^*$ is bounded, continuous and strongly monotone: for some $m_A>0$,
		\begin{equation}\nonumber\label{Amono}
			\langle Av_1-Av_2,v_1-v_2\rangle\ge m_A\|v_1-v_2\|_X^2\qquad\forall v_1,v_2\in X.
		\end{equation}
		\item \label{ass:A3} $J_i:X_i\to\mathbb{R}$ is locally Lipschitz and there exist constants $c_{i,0},c_{i,1},\alpha_i\ge0$ such that
		\begin{align}
			\|\partial J_i(z)\|_{X_i^*} &\le c_{i,0}+c_{i,1}\|z\|_{X_i} \quad\forall z\in X_i,\label{ass:A3a}\\
			J_i^0(z_1;z_2-z_1)+J_i^0(z_2;z_1-z_2) &\le \alpha_i\|z_1-z_2\|_{X_i}^2 \quad\forall z_1,z_2\in X_i.\label{ass:A3b}
		\end{align}
		\item \label{ass:A4} $\displaystyle \alpha_1c_1^2+\alpha_2c_2^2 < m_A$.
		\item \label{ass:A5} $f\in X^*$.
	\end{assumptions}
	Then there exists a unique $u\in X$ solving
	\begin{equation}\label{generalhvi}
		\langle Au,v-u\rangle + J_1^0(\gamma_1u;\gamma_1v-\gamma_1u) + J_2^0(\gamma_2u;\gamma_2v-\gamma_2u) \ge \langle f,v-u\rangle \qquad\forall v\in X.
	\end{equation}
\end{theorem}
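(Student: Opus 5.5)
Since the statement specializes \cite[Theorem 3.1]{han2019media} with $K=X$, one could simply cite that result. To give a self-contained argument, I would proceed in three steps: a surjectivity argument for existence, a direct monotonicity estimate for uniqueness, and a final step showing that the operator solution satisfies \eqref{generalhvi}.

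\textbf{Step 1: reformulation as an inclusion.} Define the multivalued operator $T:X\to 2^{X^*}$ by
\[
T(v)=Av+\gamma_1^*\partial J_1(\gamma_1 v)+\gamma_2^*\partial J_2(\gamma_2 v).
\]
I aim to find $u$ with $f\in T(u)$. Suppose $f=Au+\gamma_1^*\xi_1+\gamma_2^*\xi_2$ with $\xi_i\in\partial J_i(\gamma_iu)$. Pairing with $v-u$ and using $\langle\xi_i,z\rangle\le J_i^0(\gamma_iu;z)$ then gives \eqref{generalhvi} directly.

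\textbf{Step 2: existence via pseudomonotone surjectivity.} The map $z\mapsto\partial J_i(z)$ has nonempty, convex, weak$^*$-compact values and a closed graph in the (strong $X_i$) $\times$ (weak$^*$ $X_i^*$) topology. Combined with \ref{ass:A2}, $T$ is bounded by \eqref{ass:A3a}. For pseudomonotonicity, I would invoke the standard result that the sum of a monotone continuous operator and a "relaxed monotone" upper semicontinuous multimap is pseudomonotone.

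The key inequality comes from \eqref{ass:A3b}, via the relation $\langle\zeta_1-\zeta_2,z_1-z_2\rangle\ge-\alpha_i\|z_1-z_2\|^2$ for $\zeta_k\in\partial J_i(z_k)$. Together with \ref{ass:A4}, this yields
\[
\langle \eta_1-\eta_2,v_1-v_2\rangle\ge (m_A-\alpha_1c_1^2-\alpha_2c_2^2)\|v_1-v_2\|_X^2
\]
for all $\eta_k\in T(v_k)$. So $T$ is strongly monotone, and in particular maximal monotone once upper semicontinuity is checked.

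Coercivity follows from strong monotonicity. Fix any $\eta_0\in T(0)$. Then
\[
\langle\eta,v\rangle\ge m\|v\|^2-\|\eta_0\|\,\|v\|,
\]
so $T$ is coercive. The surjectivity theorem for bounded, coercive, pseudomonotone (or maximal monotone) multimaps on reflexive spaces then gives a $u$ with $f\in T(u)$.

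\textbf{Step 3: uniqueness.} Let $u_1,u_2$ both solve \eqref{generalhvi}. Take $v=u_2$ in the inequality for $u_1$ and $v=u_1$ in the inequality for $u_2$, then add. Applying \eqref{ass:A3b} with $z_k=\gamma_iu_k$ and \ref{ass:A1} gives
\[
m_A\|u_1-u_2\|^2\le(\alpha_1c_1^2+\alpha_2c_2^2)\|u_1-u_2\|^2.
\]
Then \ref{ass:A4} forces $u_1=u_2$. Uniqueness must be shown for \eqref{generalhvi} itself, not just for the inclusion, and this direct argument does exactly that.

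\textbf{Main obstacle.} The hardest step is verifying pseudomonotonicity, or equivalently upper semicontinuity of $v\mapsto\gamma_i^*\partial J_i(\gamma_iv)$ in the strong-to-weak$^*$ sense. The graph closedness of $\partial J_i$ gives this for strongly convergent sequences. Maximal monotonicity for a strongly monotone operator with closed, convex values and this semicontinuity then follows from standard results, e.g.\ in \cite{migorski2013non}, which I would cite.
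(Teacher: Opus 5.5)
Your argument is correct, and it is a genuinely different route: the paper gives no proof of this theorem. It imports the result from \cite[Theorem 3.1]{han2019media}, a variational--hemivariational result with a closed convex constraint set $K$, by taking $K=X$. You instead give a self-contained operator-theoretic proof, in three moves.
\begin{itemize}
\item From \eqref{ass:A3b} you extract the relaxed monotonicity $\langle\zeta_1-\zeta_2,z_1-z_2\rangle\ge-\alpha_i\|z_1-z_2\|_{X_i}^2$ of $\partial J_i$. Combined with \ref{ass:A1}, \ref{ass:A2} and \ref{ass:A4}, this makes the whole multimap $T=A+\gamma_1^*\partial J_1(\gamma_1\cdot)+\gamma_2^*\partial J_2(\gamma_2\cdot)$ strongly monotone.
\item Maximal monotonicity follows from upper semicontinuity and nonempty convex weakly compact values. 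Coercivity follows from strong monotonicity, and the Browder--Rockafellar surjectivity theorem gives existence.
\item Uniqueness is proved directly for \eqref{generalhvi}. This is the same computation the paper carries out for $(P_m)$ in the proof of Theorem~\ref{th:uniqueu}.
\end{itemize}

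The citation buys generality ($K\neq X$) at no cost. Your route buys transparency: it shows that no compactness of $\gamma_i$ is needed, precisely because \ref{ass:A4} makes the full operator monotone.

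For the same reason, delete the sentence claiming that a monotone continuous operator plus a relaxed monotone u.s.c.\ multimap is pseudomonotone. Without compactness this is false even inside the theorem's own class. Take $X=X_1=\ell^2$, $\gamma_1=I$, $J_1(z)=-\tfrac{\alpha}{2}\|z\|^2$, $A=0$, and $v_n=e_n\rightharpoonup0$. Pseudomonotonicity then fails at $y=0$, since it would require $0\le-\alpha$. So only your maximal-monotone branch is valid. If you want the pseudomonotone language, note that a bounded maximal monotone operator with full domain is pseudomonotone anyway.

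One minor point: $X_i$ need not be separable, so you cannot extract weak$^*$-convergent subsequences in $X_i^*$. Check the semicontinuity along segments instead via $\limsup_n J_i^0(z_n;w)\le J_i^0(z;w)$ for $z_n\to z$, together with $J_i^0(z;w)=\max_{\zeta\in\partial J_i(z)}\langle\zeta,w\rangle$.
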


We now prove the existence and uniqueness of a solution to Problem $(P_m)$.
Set $V_1 = L^2(\Omega)$, $V_2 = L^2(\Gamma_S)$ and let $\gamma_1: V \to V_1$ be the embedding operator, $\gamma_2: V \to V_2$ the trace operator.
Define $A: V \to V^*$ by
\begin{equation}\nonumber\label{Auv}
	\langle Au,v\rangle = a(u,v)\qquad\forall\, u,v\in V.
\end{equation}
Let $\lambda_L$ be the smallest eigenvalue of the problem
\[
\begin{aligned}
	Lu &= \lambda u \quad \text{in } \Omega,\\
	u  &= 0 \qquad\text{on } \Gamma_D,\\
	\frac{\partial u}{\partial\nu_L} &= 0 \qquad\text{on } \Gamma_S .
\end{aligned}
\]
Testing the first equation with $u$, integrating by parts, and using the boundary conditions togethe we obtain
\[
a(u,u) = \lambda \|u\|_{L^2(\Omega)}^2 .
\]
Hence, by the definition of the smallest eigenvalue,
\begin{equation}\label{gamma1u}
	\|\gamma_1 u\|_{L^2(\Omega)} = \|u\|_{L^2(\Omega)} \le \lambda_L^{-\frac12} \sqrt{a(u,u)} \le \lambda_L^{-\frac12} \|u\|_V .
\end{equation}
Similarly, let $\mu_L$ be the smallest eigenvalue of
\[
\begin{aligned}
	Lu &= 0 \qquad\text{in } \Omega,\\
	u  &= 0 \qquad\text{on } \Gamma_D,\\
	\frac{\partial u}{\partial\nu_L} &= \mu u \quad \text{on } \Gamma_S .
\end{aligned}
\]
Then, by the Poincaré inequality and the trace theorem,
\begin{equation}\label{gamma2u}
	\|\gamma_2 u\|_{L^2(\Gamma_S)} = \|u\|_{L^2(\Gamma_S)} \le \mu_L^{-\frac12} \sqrt{a(u,u)} \le \mu_L^{-\frac12} \|u\|_V .
\end{equation}
Therefore, Assumption~\ref{ass:A1} is verified with $c_1 = \lambda_L^{-\frac12}$, $c_2 = \mu_L^{-\frac12}$.

For Assumption~\ref{ass:A2}, set $w = v_1 - v_2$; then
\begin{equation}\label{modelA2}
	\begin{aligned}
		\langle A v_1 - A v_2, w \rangle &= a(v_1,w) - a(v_2,w) \\
		&= a(w,w) \\
		&\ge \theta \|w\|_V^2 .
	\end{aligned}
\end{equation}

Define the two functionals
\begin{equation}\label{J1}
	J_1: V_1 \to \mathbb{R},\quad J_1(v) = \int_\Omega j_1(v)\,dx,
\end{equation}
\begin{equation}\label{J2}
	J_2: V_2 \to \mathbb{R},\quad J_2(v) = \int_{\Gamma_S} j_2(v)\,ds .
\end{equation}

From \cite[Theorem 3.47]{migorski2013non} we have
\begin{equation}\label{J10}
	J_1^0(v;w) \le \int_\Omega j_1^0(v;w)\,dx \qquad \forall\, v,w\in V_1,
\end{equation}
\begin{equation}\label{J20}
	J_2^0(v;w) \le \int_{\Gamma_S} j_2^0(v;w)\,ds \qquad \forall\, v,w\in V_2 .
\end{equation}
Using \eqref{J1} and \eqref{eq:cond:b},
\begin{equation}
	\begin{aligned}
		\|\partial J_1(z)\| &\le |J_1^0(z,w)| \\
		&\le \Bigl| \int_\Omega j_1^0(z,w)\,dx \Bigr| \\
		&\le \Bigl| \int_\Omega \bigl( \bar c_{1,0} + \bar c_{1,1}|z| \bigr) dx \Bigr|,
	\end{aligned}
\end{equation}
further
\begin{equation}\label{partialJz1}
	\|\partial J_1(z)\|_{L^2(\Omega)} \le c_{1,0} + c_{1,1} \|z\|_{L^2(\Omega)},
\end{equation}
and similarly,
\begin{equation}\label{partialJz2}
	\|\partial J_2(z)\|_{L^2(\Gamma_S)} \le c_{2,0} + c_{2,1} \|z\|_{L^2(\Gamma_S)} .
\end{equation}
Here the constants $c_{1,0},c_{1,1}$ depend on $\bar c_{1,0},\bar c_{1,1}$ and $|\Omega|$, while $c_{2,0},c_{2,1}$ depend on $\bar c_{2,0},\bar c_{2,1}$ and $|\Gamma_S|$.

From \eqref{J10} and \eqref{eq:cond:c} we obtain
\begin{equation}\label{jioapl}
	\begin{aligned}
		J_1^0(z_1;z_2-z_1) + J_1^0(z_2;z_1-z_2)
		&\le \int_{\Omega} \bigl( j_1^0(z_1;z_2-z_1) + j_1^0(z_2;z_1-z_2) \bigr) \, dx \\
		&\le \alpha_1 \| z_1 - z_2 \|_{L^2(\Omega)}^2,
	\end{aligned}
\end{equation}
and similarly,
\begin{equation}\label{j1j2apl}
	J_2^0(z_1;z_2-z_1) + J_2^0(z_2;z_1-z_2) \le \alpha_2 \| z_1 - z_2 \|_{L^2(\Gamma_S)}^2 .
\end{equation}
Thus Assumption~\ref{ass:A3} holds.  In the present setting, the smallness condition \ref{ass:A4} becomes
\begin{equation}\label{assa4}
	\alpha_1 \lambda_L^{-1} + \alpha_2 \mu_L^{-1} < \theta .
\end{equation}
To verify Assumption~\ref{ass:A5}, note that $f_0\in L^2(\Omega)$ gives
\begin{equation}
	\Bigl| \int_{\Omega} f_0 v \, dx \Bigr| \le \|f_0\|_{L^2(\Omega)} \|v\|_{L^2(\Omega)} \le \|f_0\|_{L^2(\Omega)} \|v\|_V .
\end{equation}
Applying \cref{th:generalth}, we conclude that under condition \eqref{assa4} there exists a unique solution $u\in V$ to the problem
\begin{equation}\label{Jhvi}
	u\in V,\quad \langle Au,v\rangle + J_1^0(\gamma_1 u;\gamma_1 v) + J_2^0(\gamma_2 u;\gamma_2 v) \ge \int_{\Omega} f_0 v \, dx \quad \forall\, v\in V .
\end{equation}
Finally, by \eqref{J10} and \eqref{J20}, any solution of \eqref{Jhvi} also solves Problem ($P_m$).

\begin{theorem}\label{th:uniqueu}
	Let $\Omega\subset\mathbb{R}^d$ be an open, bounded, connected domain.
	Assume that $f_0\in L^2(\Omega)$, Assumption $H(j)$ holds, and the smallness condition~\eqref{assa4} is satisfied.
	Then Problem ($P_m$) admits a unique solution $u\in V$.
\end{theorem}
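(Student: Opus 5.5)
The plan is to split the statement into existence and uniqueness and treat them separately. Existence comes essentially for free from the discussion preceding the statement. Uniqueness has to be argued directly for Problem ($P_m$), because ($P_m$) is formulated with the integrated directional derivatives $\int_\Omega j_1^0(u;v)\,dx$ and $\int_{\Gamma_S} j_2^0(u;v)\,ds$. These are only upper bounds for $J_1^0$ and $J_2^0$, by \eqref{J10}–\eqref{J20}. Hence ($P_m$) is a priori a weaker problem than \eqref{Jhvi}, and the uniqueness part of \cref{th:generalth} does not transfer automatically.

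\emph{Existence.} I would collect the verifications already carried out. Assumption \ref{ass:A1} holds with $c_1=\lambda_L^{-1/2}$ and $c_2=\mu_L^{-1/2}$ by \eqref{gamma1u}–\eqref{gamma2u}. Assumption \ref{ass:A2} holds with $m_A=\theta$ by \eqref{bounded} and \eqref{modelA2}. Assumption \ref{ass:A3} follows from \eqref{partialJz1}–\eqref{j1j2apl}, \ref{ass:A4} is exactly \eqref{assa4}, and \ref{ass:A5} holds since $f_0\in L^2(\Omega)$. \cref{th:generalth} then yields a solution $u$ of \eqref{generalhvi}. Since $X=V$ is a linear space, I would replace $v$ by $v+u$ and use positive homogeneity of $J_i^0(\gamma_iu;\cdot)$ to rewrite it as \eqref{Jhvi}. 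Then \eqref{J10}–\eqref{J20} show that $u$ solves \eqref{HVI1}.

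\emph{Uniqueness.} Let $u_1,u_2\in V$ both solve \eqref{HVI1} and set $w=u_1-u_2$. I would test the inequality for $u_1$ with $v=u_2-u_1=-w$ and the inequality for $u_2$ with $v=u_1-u_2=w$, and add the two. The right-hand sides cancel, and $a(u_1,-w)+a(u_2,w)=-a(w,w)$. This gives
\[
a(w,w)\le \int_\Omega\bigl(j_1^0(u_1;u_2-u_1)+j_1^0(u_2;u_1-u_2)\bigr)dx+\int_{\Gamma_S}\bigl(j_2^0(u_1;u_2-u_1)+j_2^0(u_2;u_1-u_2)\bigr)ds .
\]
Applying \eqref{eq:cond:c} pointwise bounds the right-hand side by $\alpha_1\|w\|_{L^2(\Omega)}^2+\alpha_2\|w\|_{L^2(\Gamma_S)}^2$. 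Then \eqref{gamma1u}–\eqref{gamma2u} bound this by $(\alpha_1\lambda_L^{-1}+\alpha_2\mu_L^{-1})\|w\|_V^2$, assuming $\alpha_i\ge0$; if some $\alpha_i<0$, that term can simply be dropped. Combined with the coercivity \eqref{coercive}, this gives $(\theta-\alpha_1\lambda_L^{-1}-\alpha_2\mu_L^{-1})\|w\|_V^2\le0$. The smallness condition \eqref{assa4} then forces $w=0$.

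The main point needing care is that the integrals in \eqref{HVI1} are well defined and that the additions above are legitimate, i.e.\ no $\infty-\infty$ arises. I would argue this as follows. For fixed $u,v$, the map $x\mapsto j_i^0(u(x);v(x))$ is measurable, since $j_i^0$ is upper semicontinuous and $j_i^0(t;s)=\max\{\zeta s:\zeta\in\partial j_i(t)\}$. The growth bound \eqref{eq:cond:b} gives $|j_i^0(u;v)|\le(\bar c_{i,0}+\bar c_{i,1}|u|)|v|$, which is integrable over $\Omega$, respectively $\Gamma_S$, for $u,v\in V$ by the Cauchy–Schwarz inequality and the trace theorem. With this in place, all the manipulations above are manipulations of finite real numbers.
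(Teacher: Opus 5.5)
Your plan follows the paper's proof step for step: existence from \cref{th:generalth} together with \eqref{J10}--\eqref{J20}, and uniqueness by testing with $\pm(u_1-u_2)$, adding, and applying \eqref{eq:cond:c} pointwise. Your extra care about measurability and finiteness of the integrals is fine. The gap, which the paper shares, is the step where you use \eqref{gamma1u}--\eqref{gamma2u} to bound $\alpha_1\|w\|_{L^2(\Omega)}^2+\alpha_2\|w\|_{L^2(\Gamma_S)}^2$ by $(\alpha_1\lambda_L^{-1}+\alpha_2\mu_L^{-1})\|w\|_V^2$. The same problem affects the claim that \ref{ass:A1} holds with $c_1=\lambda_L^{-1/2}$, $c_2=\mu_L^{-1/2}$, so that \ref{ass:A4} becomes \eqref{assa4}. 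The Rayleigh-quotient bounds $\|w\|_{L^2(\Omega)}^2\le\lambda_L^{-1}a(w,w)$ and $\|w\|_{L^2(\Gamma_S)}^2\le\mu_L^{-1}a(w,w)$ are correct. The final step $a(w,w)\le\|w\|_V^2$ is not: together with \eqref{coercive} it would force $\theta\le1$.

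With \eqref{assa4} as written, the statement is in fact false. Take $a_{ij}=2\delta_{ij}$, $a_0=0$ and $\theta=2$. Then $\lambda_L=2\lambda_1$, where $\lambda_1>0$ is the first eigenvalue of $-\Delta$ on $V$ and $\phi_1$ is a corresponding eigenfunction. Take $j_2\equiv0$, so $\alpha_2=0$, and $j_1(t)=-\lambda_1t^2$. Then $j_1^0(t;s)=-2\lambda_1ts$ and $|\partial j_1(t)|=2\lambda_1|t|$, and \eqref{eq:cond:c} holds with $\alpha_1=2\lambda_1$. So $\alpha_1\lambda_L^{-1}+\alpha_2\mu_L^{-1}=1<2=\theta$. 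But \eqref{HVI1} is linear in $v$, hence equivalent to $2\int_\Omega(\nabla u\cdot\nabla v-\lambda_1uv)\,dx=\int_\Omega f_0v\,dx$ for all $v\in V$. For $f_0=0$ every $t\phi_1$ is a solution. For $f_0=\phi_1$, taking $v=\phi_1$ gives $0=\|\phi_1\|_{L^2(\Omega)}^2$, so no solution exists.

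The fix is to work in the energy norm rather than in $\|\cdot\|_V$. Your computation then gives $(1-\alpha_1\lambda_L^{-1}-\alpha_2\mu_L^{-1})\,a(w,w)\le0$, so uniqueness holds under $\alpha_1\lambda_L^{-1}+\alpha_2\mu_L^{-1}<1$. Existence follows from \cref{th:generalth} applied on $V$ with the norm $a(v,v)^{1/2}$, which is equivalent to $\|\cdot\|_V$ by \eqref{bounded} and \eqref{coercive}. In that norm $m_A=1$, $c_1=\lambda_L^{-1/2}$ and $c_2=\mu_L^{-1/2}$. Alternatively, keep $m_A=\theta$ and replace $\lambda_L,\mu_L$ by the corresponding eigenvalues of $-\Delta$, which are the right constants relative to $\|\nabla v\|_{L^2(\Omega)}$. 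Condition \eqref{assa4} itself is sufficient only when $\theta\le1$.

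Separately, your remark that a term with $\alpha_i<0$ ``can simply be dropped'' does not close the argument. After dropping it you are left with a factor involving only the other $\alpha_j$, and the smallness condition does not make that factor positive when $\alpha_i<0$. As in \ref{ass:A3}, assume $\alpha_i\ge0$, replacing any negative $\alpha_i$ by $0$ in the smallness condition.
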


\begin{proof}
	Existence of a solution has already been established; it remains to prove uniqueness.
	Let $u_1,u_2\in V$ be two solutions of Problem $(P_m)$.  They satisfy
	\[
	a(u_1,u_2-u_1) + \int_{\Omega} j_1^0(u_1;u_2-u_1)\,dx + \int_{\Gamma_{S}} j_2^0(u_1;u_2-u_1)\,ds \ge \int_{\Omega} f_0(u_2-u_1)\,dx,
	\]
	\[
	a(u_2,u_1-u_2) + \int_{\Omega} j_1^0(u_2;u_1-u_2)\,dx + \int_{\Gamma_{S}} j_2^0(u_2;u_1-u_2)\,ds \ge \int_{\Omega} f_0(u_1-u_2)\,dx .
	\]
	Adding these two inequalities and employing \eqref{gamma1u}, \eqref{gamma2u}, \eqref{modelA2}, and \eqref{j1j2apl} yields
	\begin{align*}
		\theta \|u_2-u_1\|_{V}^2
		&\le a(u_2-u_1,u_2-u_1) \\
		&\le \int_{\Omega} \bigl( j_1^0(u_1;u_2-u_1) + j_1^0(u_2;u_1-u_2) \bigr) dx \\
		&\quad + \int_{\Gamma_{S}} \bigl( j_2^0(u_1;u_2-u_1) + j_2^0(u_2;u_1-u_2) \bigr) ds \\
		&\le (\alpha_1\lambda_L^{-1} + \alpha_2\mu_L^{-1}) \|u_2-u_1\|_{V}^2 .
	\end{align*}
	Rearranging gives
	\[
	\bigl( \theta - \alpha_1\lambda_L^{-1} - \alpha_2\mu_L^{-1} \bigr) \|u_2-u_1\|_{V}^2 \le 0 .
	\]
	By \eqref{assa4} the factor in parentheses is strictly positive; therefore $\|u_2-u_1\|_V = 0$, i.e., $u_1 = u_2$.
\end{proof}

\section{Finite element approximation}\label{sec:fem}

For simplicity, this paper focuses on the two-dimensional case. We still retain the previous assumptions. In this section, we focus on the finite element method for Problem ($P_m$). Throughout the paper, $c$ denotes a generic positive constant independent of $h$.

Let $\mathcal T_h$ be a family of quasi-uniform triangulations of $\bar{\Omega}$ into elements $T$ that are compatible with the partition of the boundary $\partial\Omega = \Gamma_D\cup\Gamma_{S},$ in the sense that if an edge $e \subset \partial T$ satisfies meas($e \cap \Gamma^i_{D/S} )>0$, then $e \in \Gamma^i_{D/S}$. We use the linear element space corresponding
to $\mathcal{T}^h$:
\begin{equation}\nonumber\label{Vh}
	V_h = \begin{Bmatrix}
		v_h\in C(\overline{\Omega})\mid v_h|_T\in\mathbb{P}_1(T),\ T\in\mathcal{T}^h,\ v_h=0\ \mathrm{on}\ \Gamma_D
	\end{Bmatrix},
\end{equation}
where $\mathbb{P}_1(T)$ denotes the space of polynomial functions with the total degree no more than 1 on $T$. The discrete scheme of the finite element method  for Problem ($P_m$) is: 
\begin{problem}[$P_d$]
Find $u_h \in V_h$ such that
\begin{equation}\label{discreteHVI}
	a{(u_{h},v_{h})}+\int_{\Omega}j_1^0(u_{h};v_{h})\mathrm{d}x+\int_{\Gamma_{S}}j_2^0(u_{h};v_{h})\mathrm{d}s
	\geq\int_{\Omega}f_0v_{h}\mathrm{d}x,\quad\forall\, v_{h}\in V_{h}.
\end{equation}
\end{problem}

Since \(V_h\subset V\), the boundedness and coercivity of \(a(\cdot,\cdot)\) on
\(V_h\) follow directly from \eqref{bounded} and \eqref{coercive}. More precisely,
\begin{equation}\label{boundedh}
	|a(u_h,v_h)|
	\le c_a\|u_h\|_V\|v_h\|_V,\quad \forall\, u_h,v_h\in V_h.
\end{equation}

\begin{equation}\label{coerciveh}
	a(v_h,v_h)\ge \theta\|v_h\|_V^2,\quad \forall\, v_h\in V_h.
\end{equation}

The existence of the discrete solution $u_h$ follows from the coercivity of $a(\cdot,\cdot)$ and the growth conditions on $j_1,j_2$, while uniqueness is guaranteed by the smallness condition \eqref{assa4}, exactly as in the continuous case (\cref{th:uniqueu}). The next lemma shows that the numerical solutions are uniformly bounded in $V_h$.

\begin{lemma}\label{uhbounded}
	The numerical solutions of Problem ($P_d$) are uniformly bounded in $V_h$, independently of $h$.
\end{lemma}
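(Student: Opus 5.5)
Our plan is to test the discrete inequality \eqref{discreteHVI} with $v_h=-u_h$. This gives
\[
a(u_h,u_h)\le \int_\Omega j_1^0(u_h;-u_h)\,dx+\int_{\Gamma_S} j_2^0(u_h;-u_h)\,ds+\int_\Omega f_0u_h\,dx .
\]
The left side is bounded below by $\theta\|u_h\|_V^2$ by \eqref{coerciveh}. The task is then to bound each generalized-directional-derivative term by a quadratic part with the \emph{same} constants $\alpha_i$ appearing in \eqref{assa4}, plus lower-order terms. This matters because a bound using only the growth condition \eqref{eq:cond:b} would produce $\bar c_{i,1}$ in place of $\alpha_i$, and no smallness would then be available.

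The key pointwise step uses \eqref{eq:cond:c} with $t_1=t$ and $t_2=0$:
\[
j_i^0(t;-t)\le \alpha_i t^2 - j_i^0(0;t).
\]
By the definition of the Clarke directional derivative, $-j_i^0(0;t)\le |j_i^0(0;t)|\le \max\{|\zeta|:\zeta\in\partial j_i(0)\}\,|t|\le \bar c_{i,0}|t|$, using \eqref{eq:cond:b} at $t=0$. Hence
\[
j_i^0(t;-t)\le \alpha_i t^2+\bar c_{i,0}|t|\qquad\forall t\in\mathbb R .
\]
We integrate this with $t=u_h(x)$ over $\Omega$ for $i=1$ and over $\Gamma_S$ for $i=2$. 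The quadratic terms are then controlled via \eqref{gamma1u} and \eqref{gamma2u}, giving $\alpha_1\lambda_L^{-1}\|u_h\|_V^2+\alpha_2\mu_L^{-1}\|u_h\|_V^2$. The linear terms are handled by Cauchy--Schwarz followed by the same embedding inequalities:
\[
\bar c_{1,0}|\Omega|^{1/2}\|u_h\|_{L^2(\Omega)}+\bar c_{2,0}|\Gamma_S|^{1/2}\|u_h\|_{L^2(\Gamma_S)}\le c\|u_h\|_V .
\]
Finally, $\int_\Omega f_0u_h\,dx\le c\|f_0\|_{L^2(\Omega)}\|u_h\|_V$.

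Collecting terms, we obtain
\[
(\theta-\alpha_1\lambda_L^{-1}-\alpha_2\mu_L^{-1})\|u_h\|_V^2\le c\bigl(\bar c_{1,0}+\bar c_{2,0}+\|f_0\|_{L^2(\Omega)}\bigr)\|u_h\|_V .
\]
By \eqref{assa4} the factor on the left is positive. Dividing through yields $\|u_h\|_V\le c$, with $c$ independent of $h$, since every constant involved is a continuous-level constant. This independence is automatic because $V_h\subset V$.

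The main obstacle is the pointwise inequality. It needs care if some $\alpha_i$ is negative, which the statement allows. In that case one can either replace $\alpha_i$ by $\max\{\alpha_i,0\}$, or keep the sign directly, since a negative quadratic term only helps the estimate. We also need the measurability and integrability of $x\mapsto j_i^0(u_h(x);-u_h(x))$, which follows from upper semicontinuity and the bound $|j_i^0(t;s)|\le(\bar c_{i,0}+\bar c_{i,1}|t|)|s|$.
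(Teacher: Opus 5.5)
Your proposal is correct and follows the paper's proof essentially step for step: test with $v_h=-u_h$, apply \eqref{eq:cond:c} with $t_2=0$ and bound $-j_i^0(0;t)$ by $\bar c_{i,0}|t|$, integrate, and close the estimate with \eqref{coerciveh}, \eqref{gamma1u}--\eqref{gamma2u} and \eqref{assa4}. One small caution about your side remark: if the $\alpha_i$ have mixed signs, neither dropping the negative term nor keeping it closes the estimate under \eqref{assa4} as stated, since a negative $\alpha_1\|u_h\|_{L^2(\Omega)}^2$ cannot be bounded by $\alpha_1\lambda_L^{-1}\|u_h\|_V^2$; the paper, however, implicitly takes $\alpha_i\ge 0$ as in Theorem~\ref{th:generalth}, so this does not affect your main argument.
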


\begin{proof}
	Taking $v_h = -u_h \in V_h$ in \eqref{discreteHVI}, we have 
	$$a(u_{h},u_{h})\leq\int_{\Omega}^{}j_1^0(u_{h};-u_{h})\mathrm{d}x+\int_{\Gamma_{S}}^{}j_2^0(u_{h};-u_{h})\mathrm{d}s+\int_{\Omega}f_0u_{h}\mathrm{d}x.$$
	
	Applying Assumption \eqref{eq:cond:c} pointwise with $t_1 = u_h \in V_h$ and $t_2 = 0$ (for $i = 1,2$), we obtain
	$$j_i^0(u_h;-u_h)\leq\alpha_i|u_h|^2 - j_i^0(0;u_h),$$
	and, by \eqref{eq:cond:b},
	$$|j_i^0(0;u_h)|\leq(\bar{c}_{i,0}+\bar{c}_{i,1}|0|)|u_h|=\bar{c}_{i,0}|u_h|.$$
	Thus,
	$$j_i^0(u_h;-u_h)\leq\alpha_i|u_h|^2+\bar{c}_{i,0}|u_h|,$$
	and hence, 
	\begin{equation}\label{a1c1}
		\int_{\Omega}j_1^0(u_{h};-u_{h})\mathrm{d}x\leq\alpha_1\|u_{h}\|_{L^2\left(\Omega\right)}^2+c_{1,0}\|u_{h}\|_{ L^2\left(\Omega\right)},
	\end{equation}
	\begin{equation}\label{a2c2}
		\int_{\Gamma_{S}}j_2^0(u_{h};-u_{h})\mathrm{d}s\leq\alpha_2\|u_{h}\|_{L^2\left(\Gamma_{S}\right)}^2+c_{2,0}\|u_{h}\|_{ L^2\left(\Gamma_{S}\right) }.
	\end{equation}
	
	Adding the two inequalities and using the coercivity \eqref{coerciveh}, the operator estimates \eqref{gamma1u}--\eqref{gamma2u} for $\gamma_1$ and $\gamma_2$, together with \eqref{a1c1}--\eqref{a2c2}, we obtain:
	
	\begin{equation}\nonumber
		\begin{aligned}
			\theta\|u_{h}\|_{V}^2
			& \leq a(u_{h},u_{h}) \\
			& \leq\|f_0\|_{L^2(\Omega)}\|u_{h}\|_{L^2(\Omega)}+\alpha_1\|u_{h}\|_{L^2(\Omega)}^2+c_{1,0}\|u_{h}\|_{L^2(\Omega)}+ \\
			& \quad\alpha_2\|u_{h}\|_{L^2(\Gamma_{S})}^2+c_{2,0}\|u_{h}\|_{L^2(\Gamma_{S})} \\
			& \leq c\|u_{h}\|_{V}+(\alpha_1\lambda_L^{-1}+\alpha_2\mu_L^{-1})\|u_{h}\|_{V}^2.
		\end{aligned}
	\end{equation}
	Hence,
	$$(\theta-\alpha_1\lambda_L^{-1}-\alpha_2\mu_L^{-1})\|u_{h}\|_{V}^2\leq c\|u_{h}\|_{V}.$$
	Since the smallness condition \eqref{assa4} holds, $\|u_{h}\|_{V}$ is uniformly bounded.
	
\end{proof}

We now establish a C\'ea-type estimate, which will be used to derive the optimal error bound for the finite element approximation.
\begin{theorem}\label{lem:cea}
	Under the conditions of Theorem \ref{th:uniqueu}, let $u \in V$ and $u_h \in V_h$ be the solutions of Problem ($P_m$) and Problem ($P_d$), respectively. Assume 
\begin{equation}\label{uregularity}
	u\in H^2(\Omega),\qquad
	u|_{\Gamma_S^i}\in H^2(\Gamma_S^i),\quad 1\le i\le m.
\end{equation}
	For all $v_h\in V_h$, the following estimate holds:
	\begin{equation}\label{eq:cea}
		\|u-u_{h}\|_{V} \leq c(u)\left(\|u-v_{h}\|_{V}+\|u-v_{h}\|_{L^2(\Omega)}^{\frac12}+\|u-v_{h}\|_{L^2(\Gamma_{S})}^{\frac12}\right),
	\end{equation}
	where $c(u)$ is independent of $h$ and depends on $\|u\|_{H^{2}(\Omega)}$ and $\|u\|_{H^{2}(\Gamma_{S}^{i})}$,  $1\leq i\leq m$.
\end{theorem}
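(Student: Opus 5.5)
The plan follows the standard argument of Han for hemivariational inequalities. The key idea is to test the continuous inequality with a direction that is not in $V_h$, and then use the regularity \eqref{uregularity} to recover a pointwise equation that makes the resulting residual term small. Fix $v_h\in V_h$ and start from coercivity \eqref{coerciveh}:
\[
\theta\|u-u_h\|_V^2\le a(u-u_h,u-u_h)=a(u-u_h,u-v_h)+a(u,v_h-u)+a(u,u-u_h)-a(u_h,v_h-u_h).
\]
For the last term I take $v_h-u_h$ in \eqref{discreteHVI}, which gives $-a(u_h,v_h-u_h)\le \int_\Omega j_1^0(u_h;v_h-u_h)\,dx+\int_{\Gamma_S}j_2^0(u_h;v_h-u_h)\,ds-\int_\Omega f_0(v_h-u_h)\,dx$. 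For $a(u,u-u_h)$ I use \eqref{HVI1} with $v=u_h-u$. Both are legitimate because the solutions are unique (Theorem~\ref{th:uniqueu}).

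Next I split $v_h-u_h=(v_h-u)+(u-u_h)$ and use the subadditivity in Proposition~\ref{pro:Jsub}. The terms $j_i^0(u_h;u-u_h)+j_i^0(u;u_h-u)$ are bounded by \eqref{eq:cond:c}. Together with \eqref{gamma1u}--\eqref{gamma2u}, they contribute at most $(\alpha_1\lambda_L^{-1}+\alpha_2\mu_L^{-1})\|u-u_h\|_V^2$. By \eqref{assa4} this is absorbed into the left side. The leftover terms $j_i^0(u_h;v_h-u)$ are bounded with \eqref{eq:cond:b} by $(\bar c_{i,0}+\bar c_{i,1}|u_h|)|v_h-u|$. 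Integrating and applying Cauchy--Schwarz, Lemma~\ref{uhbounded} and the trace bound turns these into $c\,(\|u-v_h\|_{L^2(\Omega)}+\|u-v_h\|_{L^2(\Gamma_S)})$. The term $a(u-u_h,u-v_h)$ is bounded by $c_a\|u-u_h\|_V\|u-v_h\|_V$ via \eqref{bounded}, and Young's inequality handles it.

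The main obstacle is the residual $R(u,v_h-u):=a(u,v_h-u)-\int_\Omega f_0(v_h-u)\,dx$. It does not vanish, because \eqref{HVI1} is only an inequality. My plan is to use $u\in H^2(\Omega)$ to integrate by parts:
\[
R(u,w)=\int_\Omega (Lu-f_0)w\,dx+\int_{\Gamma_S}\frac{\partial u}{\partial\nu_L}w\,ds .
\]
Then $\|Lu-f_0\|_{L^2(\Omega)}$ and $\|\partial u/\partial\nu_L\|_{L^2(\Gamma_S)}$ are finite. This step also needs $a_{ij}$ regular enough to make sense of $Lu\in L^2$, for instance $a_{ij}\in W^{1,\infty}$. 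The piecewise $H^2(\Gamma_S^i)$ regularity in \eqref{uregularity} is what makes the trace of the co-normal derivative lie in $L^2$. Hence $|R(u,v_h-u)|\le c(u)(\|u-v_h\|_{L^2(\Omega)}+\|u-v_h\|_{L^2(\Gamma_S)})$. This is a first-order quantity, which explains the square roots in \eqref{eq:cea}. It is also where the dependence of $c(u)$ on $\|u\|_{H^2(\Omega)}$ and $\|u\|_{H^2(\Gamma_S^i)}$ comes from.

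Collecting the bounds gives
\[
(\theta-\alpha_1\lambda_L^{-1}-\alpha_2\mu_L^{-1}-\varepsilon)\|u-u_h\|_V^2\le c\|u-v_h\|_V^2+c(u)\bigl(\|u-v_h\|_{L^2(\Omega)}+\|u-v_h\|_{L^2(\Gamma_S)}\bigr).
\]
Choosing $\varepsilon$ small and taking square roots yields \eqref{eq:cea}.
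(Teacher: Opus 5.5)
Your proposal is correct and follows the paper's proof essentially step for step: the same four-term splitting of $a(u-u_h,u-u_h)$, the same test functions $u_h-u$ and $v_h-u_h$, subadditivity, pairing via \eqref{eq:cond:c}, growth bounds with Lemma~\ref{uhbounded}, integration by parts of the residual, and Young's inequality. The only difference is that you pair $j_i^0(u;u_h-u)$ directly with $j_i^0(u_h;u-u_h)$, instead of first moving $j_i^0(u;v_h-u)$ into the paper's residual $R(v_h-u)$ and re-splitting inside $I(u_h,v_h)$; this is a harmless simplification.

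One small correction: the bound $\|\partial u/\partial\nu_L\|_{L^2(\Gamma_S)}\le c\|u\|_{H^2(\Omega)}$ comes from $u\in H^2(\Omega)$ and the trace theorem, given coefficients bounded on $\Gamma_S$ (your extra assumption $a_{ij}\in W^{1,\infty}$ supplies this). It does not come from the piecewise $H^2(\Gamma_S^i)$ regularity, which is only needed later for the boundary interpolation estimate in Theorem~\ref{th:error order}.
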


\begin{proof}
	By the coercivity of the bilinear form \eqref{coercive}, we have
	\begin{equation}\label{au-uh}
		\begin{aligned}
			\theta\|u-u_{h}\|_V^{2}&\leq a(u-u_h,u-u_h)\\
			&=a(u-u_h,u-v_h)+a(u-u_h,v_h-u_h)\\
			&=a(u-u_h,u-v_h)+a(u,v_h-u)\\
			&\quad +a(u,u-u_h)-a(u_h,v_h-u_h)
		\end{aligned}
	\end{equation}
	Taking $v=u_h-u\in V$ in \eqref{HVI1}, we obtain
	\begin{equation}\label{auu-uh}
		\begin{aligned}
			a(u,u-u_h)\leq\int_{\Omega}j_1^0(u;u_h-u)\mathrm{d}x+\int_{\Gamma_{S}}j_2^0(u;u_h-u)\mathrm{d}s-\int_{\Omega}f_0(u_h-u)\mathrm{d}x.
		\end{aligned}
	\end{equation}
	Taking $v_h-u_h \in V_h$ as the test function in \eqref{discreteHVI}, we obtain
	\begin{equation}\label{auhvh-uh}
		-a(u_h,v_h-u_h)\leq\int_\Omega j_1^0(u_h;v_h-u_h)\mathrm{d}x+\int_{\Gamma_{S}}j_2^0(u_h;v_h-u_h)\mathrm{d}s-\int_{\Omega}f_0(v_h-u_h)\mathrm{d}x.
	\end{equation}
	Substituting  \eqref{auu-uh} and \eqref{auhvh-uh} into \eqref{au-uh}, we obtain
	\begin{equation}\label{au-uh2}
		\begin{aligned}\theta\| u - u_h \|_V^2 
			&\leq a(u - u_h, u - v_h) + a(u, v_h - u) \\
			&+ \left[ \int_{\Omega} j_1^0 (u; u_h - u) + \int_{\Gamma_{S}} j_2^0 (u; u_h - u) - \int_{\Omega} f_0 (u_h - u)\mathrm{d}x \right] \\
			&+ \left[ \int_{\Omega} j_1^0 (u_h; v_h - u_h) + \int_{\Gamma_{S}} j_2^0 (u_h; v_h - u_h) - \int_{\Omega} f_0 (v_h - u_h)\mathrm{d}x \right].
		\end{aligned}
	\end{equation}
	Rearranging the above inequality and using \cref{pro:Jsub}, we obtain
	\begin{equation}\label{ari}
		\theta\|u-u_h\|_V^2\leq a(u-u_h,u-v_h)+R(v_h-u)+I(u_h,v_h),
	\end{equation}
	where 
	\begin{equation}\label{R(w)}
		R(w)=a(u,w)+\int_{\Omega}j_1^0(u;w)\mathrm{d}x+\int_{\Gamma_{S}}j_2^0(u;w)\mathrm{d}s-\int_{\Omega}f_0w\mathrm{d}x,
	\end{equation}
	\begin{equation}\label{Iuhvh}
		\begin{split}
			I(u_h,v_h) &= \int_\Omega j_1^0(u; u_h - v_h) \, \mathrm{d}x + \int_\Omega j_1^0(u_h; v_h - u_h) \, \mathrm{d}x \\
			&\quad + \int_{\Gamma_{S}} j_2^0(u; u_h - v_h) \, \mathrm{d}s + \int_{\Gamma_{S}} j_2^0(u_h; v_h - u_h) \, \mathrm{d}s.
		\end{split}
	\end{equation}
	We now estimate the terms on the right-hand side of \eqref{ari}. By the boundedness of $a(\cdot,\cdot)$ and Young's inequality, for $\varepsilon>0$,
	\begin{equation}
		\begin{aligned}
			a(u-u_{h},u-v_{h}) & \leq c_a\|u-u_{h}\|_{V}\|u-v_{h}\|_{V} \\
			& \leq\epsilon\|u-u_{h}\|_{V}^2+\frac{c_a^2}{4\epsilon}\|u-v_{h}\|_{V}^2.
		\end{aligned}
	\end{equation}
	For $R(v_h-u)$, since the regularity condition holds,
      we have
	\begin{equation}\label{R(v)}
		R(w)=\int_{\Omega}\left(Luw-f_0w+j_1^0(u;w)\right)\mathrm{d}x+\int_{\Gamma_{S}}\left(\frac{\partial u}{\partial\nu_{L}}w+j_2^0(u;w)\right)\mathrm{d}s.
	\end{equation}
	Using \eqref{partialJz1} and \eqref{partialJz2}, we obtain
	\begin{equation}\label{R(u)estimata}
		\begin{aligned}
			|R(w)| & \leq\left(\|Lu\|_{L^2\left(\Omega\right)}+\|f_0\|_{L^2\left(\Omega\right)}\right)\|w\|_{L^2\left(\Omega\right)}+\|\frac{\partial u}{\partial\nu_{L}}\|_{L^2(\Gamma_{S})}\|w\|_{L^2(\Gamma_{S})}\\
			& \quad +\left(c_{1,0}+c_{1,1}\|u\|_{L^2(\Omega)}\right)\|w\|_{L^2(\Omega)}\\
			& \quad +\left(c_{2,0}+c_{2,1}\|u\|_{L^2(\Gamma_{S})}\right)\|w\|_{L^2(\Gamma_{S})}\\
			& \leq c(u)(||w||_{L^2(\Omega)}+||w||_{L^2(\Gamma_{S})}),
		\end{aligned}
	\end{equation}
	where $c(u)$ depends on $\|u\|_{H^2(\Omega)}$.
	Indeed, by the trace theorem,
	$$\|\frac{\partial u}{\partial\nu_{L}}\|_{L^2(\Gamma_{S})}\leq c\|\nabla u\|_{L^2(\Gamma_{S})}\leq
	c\|u\|_{H^2(\Omega)}.$$
	For $I(u_h,v_h),$ we apply \cref{pro:Jsub} to obtain
	\begin{equation}
		\begin{aligned}
			I(u_{h},v_{h}) & \leq\int_{\Omega}^{}j_1^0(u;u_{h}-u)\mathrm{d}x+\int_{\Omega}^{}j_1^0(u;u-v_{h})\mathrm{d}x\\
			& +\int_{\Omega}^{}j_1^0(u_{h};v_{h}-u)\mathrm{d}x+\int_{\Omega}^{}j_1^0(u_{h};u-u_{h})\mathrm{d}x\\
			& +\int_{\Gamma_{S}}^{}j_2^0(u;u_{h}-u)\mathrm{d}s+\int_{\Gamma_{S}}^{}j_2^0(u;u-v_{h})\mathrm{d}s\\
			& +\int_{\Gamma_{S}}^{}j_2^0(u_{h};v_{h}-u)\mathrm{d}s+\int_{\Gamma_{S}}^{}j_2^0(u_h;u-u_{h})\mathrm{d}s.
		\end{aligned}
	\end{equation}
	
	By \eqref{jioapl}  and \eqref{gamma1u},
	\begin{equation}
		\begin{aligned}
			\int_{\Omega}j_1^0(u;u_h-u)\mathrm{d}x+\int_{\Omega}j_1^0(u_h;u-u_{h})\mathrm{d}x
			&\leq\alpha_1\|u-u_{h}\|_{L^2(\Omega)}^2\\
			&\leq\alpha_1\lambda_L^{-1}\|u-u_{h}\|_{V}^2.
		\end{aligned}
	\end{equation}
	
	By \eqref{partialJz1} 
	\begin{equation}
		\begin{aligned}
			& \int_{\Omega}j_1^0(u;u-v_{h})\mathrm{d}x
			\leq(c_{1,0}+c_{1,1}\|u\|_{L^2(\Omega)})\|u-v_{h}\|_{L^2(\Omega)},\\
			& \int_{\Omega}j_1^0(u_{h};v_{h}-u)\mathrm{d}x
			\leq(c_{1,0}+c_{1,1}\|u_{h}\|_{L^2(\Omega)})\|u-v_{h}\|_{L^2(\Omega)}.
		\end{aligned}
	\end{equation}

	Similarly, by \eqref{j1j2apl}, \eqref{gamma2u} and \eqref{partialJz2},
	\begin{equation}
		\begin{aligned}
			\int_{\Gamma_{S}}j_2^0(u;u_h-u)\mathrm{d}s+\int_{\Gamma_{S}}j_2^0(u_{h};u-u_{h})\mathrm{d}s
			\leq\alpha_2\mu_L^{-1}\|u-u_{h}\|_{V}^2.
		\end{aligned}
	\end{equation}
	\begin{equation}
		\begin{aligned}
			& \int_{\Gamma_{S}}j_2^0(u;u-v_{h})\mathrm{d}s
			\leq(c_{2,0}+c_{2,1}\|u\|_{L^2(\Gamma_{S})})\|u-v_{h}\|_{L^2(\Gamma_{S})},\\
			& \int_{\Gamma_{S}}j_2^0(u_{h};v_{h}-u)\mathrm{d}s
			\leq(c_{2,0}+c_{2,1}\|u_{h}\|_{L^2(\Gamma_{S})})\|u-v_{h}\|_{L^2(\Gamma_{S})}.
		\end{aligned}
	\end{equation}
	
	Combining the above inequalities and using the uniform boundedness of $u_h$ from \cref{uhbounded},
	\begin{equation}
		\begin{aligned}
			I(u_{h},v_{h})\leq{}&\left(\alpha_1\lambda_L^{-1}+\alpha_2\mu_L^{-1}\right)\|u-u_{h}\|_{V}^2 \\
			&+c(u)\left(\|u-v_{h}\|_{_{L^2\left(\Omega\right)}}+\|u-v_{h}\|_{_{L^2\left(\Gamma_{S}\right)}}\right)
		\end{aligned}
	\end{equation}
	Choose
	\begin{equation}\nonumber
		\epsilon=\frac{1}{2}\left(\theta-\alpha_1\lambda_L^{-1}-\alpha_2\mu_L^{-1}\right).
	\end{equation}
	Then
	\begin{equation}\label{erroru-uh1}
		\begin{aligned}
			& (\theta-\alpha_1\lambda_L^{-1}-\alpha_2\mu_L^{-1}-\epsilon)\|u-u_{h}\|_{V}^{} \\
			& \leq c\left(u)[\|u-v_{h}\|\right._{V}^{}+\|u-v_{h}\|_{L^2(\Omega)}^{\frac12}+\|u-v_{h}\|_{L^2(\Gamma_{S})}^{\frac12}].
		\end{aligned}
	\end{equation}
	The desired estimate then follows from the smallness condition \eqref{assa4}.
\end{proof}

\begin{theorem}\label{th:error order}
	Under the conditions of \cref{th:uniqueu}, let $u\in V$ and $u_h\in V_h$ be the solutions of Problem ($P_m$) and Problem ($P_d$), respectively. Assume further that the regularity condition \eqref{uregularity} holds. Then
	\begin{equation}\nonumber\label{oh}
		\|u-u_h\|_V\leq c(u)h,
	\end{equation}
	where $c(u)$ is independent of $h$ and depends on $\|u\|_{H^{2}(\Omega)}$ and $\|u\|_{H^{2}(\Gamma_{S}^{i})}$,  $1\leq i\leq m$.
\end{theorem}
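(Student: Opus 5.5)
The plan is to apply the Céa-type estimate \eqref{eq:cea} of \cref{lem:cea} with a specific choice of $v_h$, namely the continuous piecewise linear Lagrange interpolant $v_h=\Pi_h u\in V_h$. Under the regularity assumption \eqref{uregularity}, $u\in H^2(\Omega)\subset C(\overline{\Omega})$ in two dimensions, so nodal interpolation is well defined. Because $u=0$ on $\Gamma_D$ and the triangulation is compatible with the partition $\partial\Omega=\Gamma_D\cup\Gamma_S$, the interpolant also vanishes on $\Gamma_D$, hence $\Pi_h u\in V_h$. It then remains to bound each of the three terms on the right side of \eqref{eq:cea} by $c(u)h$.

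The two domain terms come from the standard interpolation estimates on quasi-uniform (hence shape-regular) meshes:
\[
\|u-\Pi_h u\|_V\le c\,h\,|u|_{H^2(\Omega)},\qquad \|u-\Pi_h u\|_{L^2(\Omega)}\le c\,h^2\,|u|_{H^2(\Omega)}.
\]
Taking the square root of the second gives $\|u-\Pi_h u\|_{L^2(\Omega)}^{1/2}\le c\,h\,|u|_{H^2(\Omega)}^{1/2}$, which is of the required first order.

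The main point, and the reason for the extra assumption $u|_{\Gamma_S^i}\in H^2(\Gamma_S^i)$, is the boundary term. A trace inequality applied directly to $u-\Pi_h u$ would give only $O(h^{3/2})$ in $L^2(\Gamma_S)$, so its square root would be $O(h^{3/4})$, which is suboptimal. Instead, I would use the fact that on each edge $e\subset\Gamma_S^i$ the restriction $(\Pi_h u)|_e$ coincides with the one-dimensional linear interpolant of $u|_e$, since both interpolate at the endpoints of $e$. One-dimensional interpolation theory on each side $\Gamma_S^i$, where $u|_{\Gamma_S^i}\in H^2(\Gamma_S^i)$, then gives
\[
\|u-\Pi_h u\|_{L^2(\Gamma_S)}\le c\,h^2\sum_{i=1}^m \|u\|_{H^2(\Gamma_S^i)},
\]
and taking the square root yields $O(h)$. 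This requires that the boundary edges on each $\Gamma_S^i$ have length comparable to $h$, which follows from quasi-uniformity, and that the $\Gamma_S^i$ are the flat pieces (or smooth arcs) on which the mesh edges lie.

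Combining the three bounds in \eqref{eq:cea} gives $\|u-u_h\|_V\le c(u)h$. The constant depends on $\|u\|_{H^2(\Omega)}$ and $\|u\|_{H^2(\Gamma_S^i)}$, together with the constant $c(u)$ from \cref{lem:cea}, which is uniform in $h$ by \cref{uhbounded}. I expect the boundary interpolation step to be the only delicate one; the rest is routine.
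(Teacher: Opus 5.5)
Your proposal is correct and follows the paper's proof: take $v_h=\Pi_h u$ in the C\'ea-type estimate \eqref{eq:cea} and insert the standard interpolation bounds, namely $O(h)$ for the $V$-error and $O(h^2)$ for both the $L^2(\Omega)$ and $L^2(\Gamma_S)$ errors, so that the square roots of the last two are also $O(h)$. Your extra justifications fill in details the paper states without proof: that $\Pi_h u$ is well defined and lies in $V_h$, and the edgewise one-dimensional interpolation argument showing why $u|_{\Gamma_S^i}\in H^2(\Gamma_S^i)$ is needed to avoid the suboptimal $O(h^{3/4})$ that a trace inequality would give.
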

\begin{proof}
	Let $v_h=\Pi_h u$, where $\Pi_h$ denotes the interpolation operator. By the standard interpolation estimates,
	$$\begin{aligned}
		&\|u-v_{h}\|_{V}\leq ch\|u\|_{H^{2}(\Omega)},\\
		&\|u-v_{h}\|_{L^{2}(\Omega)}\leq ch^{2}\|u\|_{H^{2}(\Omega)},\\
		&\|u-v_{h}\|_{L^{2}(\Gamma_{S})}\leq ch^{2}\left(\sum_{i=1}^{m}\|u\|_{H^{2}(\Gamma_{S}^i)}^{2}\right)^{1/2}.
	\end{aligned}$$
	Substituting these estimates into \eqref{eq:cea} yields
	\begin{equation}
		\|u-u_h\|_V\leq c(u)h.
	\end{equation}
\end{proof}

\section{Numerical Examples}\label{sec:numerical example}
In this section, we present numerical examples to verify the theoretical error estimates. The nonsmooth relations are handled using a Lagrange-multiplier-based iterative method \cite{atkin2009theo}. 

In the numerical examples, we take $\Omega=(0,1)\times(0,1),$ $\Gamma_{S}=(0,1)\times\{0\}$, and $\Gamma_{D}=\partial\Omega\backslash\Gamma_{S}.$ We adopt the uniform triangulation as shown in \cref{fig:e1mesh}.

Since the exact solution $u$ is not available, we use the numerical solution at $h = 2^{-9}$ as the ``reference'' solution  $u_{ref}$. The numerical convergence orders with respect to $h$ are then computed from the errors $\|u_{ref}-u_{h}^{}\|_{H^1(\Omega)}$. In what follows, $\lambda_h$ denotes the boundary multiplier associated with $\partial j_2(u_h)$, and $\mu_h$ denotes the interior multiplier associated with $\partial j_1(u_h)$.

For the two nonsmooth potentials $j_1$ and $j_2$, we adopt the following prototype, with different choices of the parameters $a$ and $b$ in the examples:
$$j(t)=\left\{\begin{array}{ll}
	0 & \quad\mathrm{if}\quad t<0,\\
	-e^{-at}+bt+1 & \quad\mathrm{if}\quad t\geq0.
\end{array}\right.$$
The corresponding generalized subdifferential is 
$$\partial j(t) = 
\begin{cases} 
	0 & \text{if } t < 0, \\ 
	[0, a + b] & \text{if } t = 0, \\ 
	a e^{-a t} + b & \text{if } t > 0. 
\end{cases}$$

\begin{example}\label{example1}
	We consider the following problem: 
	\begin{align*}
		-\sum_{i,j=1}^d\frac{\partial}{\partial x_j}\left(a_{ij}\frac{\partial u}{\partial x_i}\right)+a_0u &= f_0+f_1\ \qquad\quad\text{in } \Omega,\\ 
		u &= 0 \qquad\qquad\qquad\text{on } \Gamma_D, \\ 
		-\frac{\partial u}{\partial \nu_L} &\in \partial j_2(u) \qquad\qquad\text{on } \Gamma_{S},
	\end{align*}
	where $-f_{1}\in\partial j_{1}(u)$, $f_0=-40\sin(2\pi x)e^{2y}$,  $a_{ij}=\left[\begin{array}{cc} 2 & 1 \\ 1 & 1 \end{array}\right]$, $a_0 = 0$, and we take $a = b = 1$ for the function $j_1$ and $a = b = 0.5$ for the function $j_2$.
	
	\begin{table}[htbp]
		\centering
		\caption{$H^1$ errors and experimental convergence orders for Example 5.1.}
		\label{tab:example51}
		\vspace{4pt}
		\begin{tabular}{c c c c c c}
			\toprule
			$h$ & $2^{-3}$ & $2^{-4}$ & $2^{-5}$ & $2^{-6}$ & $2^{-7}$ \\
			\midrule
			$\|u_{\rm ref}-u_h\|_{H^1(\Omega)}$ & $2.4346\mathrm{e}\text{+}00$ & $1.2693\mathrm{e}\text{+}00$ & $6.4350\mathrm{e}\text{-}01$ & $3.2560\mathrm{e}\text{-}01$ & $1.5850\mathrm{e}\text{-}01$ \\
			Order & -- & 0.9396 & 0.9801 & 0.9962 & 1.0256 \\
			\bottomrule
		\end{tabular}
	\end{table}
	
	\begin{figure}[H]
		\centering
		\subfigure[uniform triangulation]{
			\raisebox{3.4mm}{\includegraphics[width=0.45\textwidth,trim=12 10 12 10,clip]{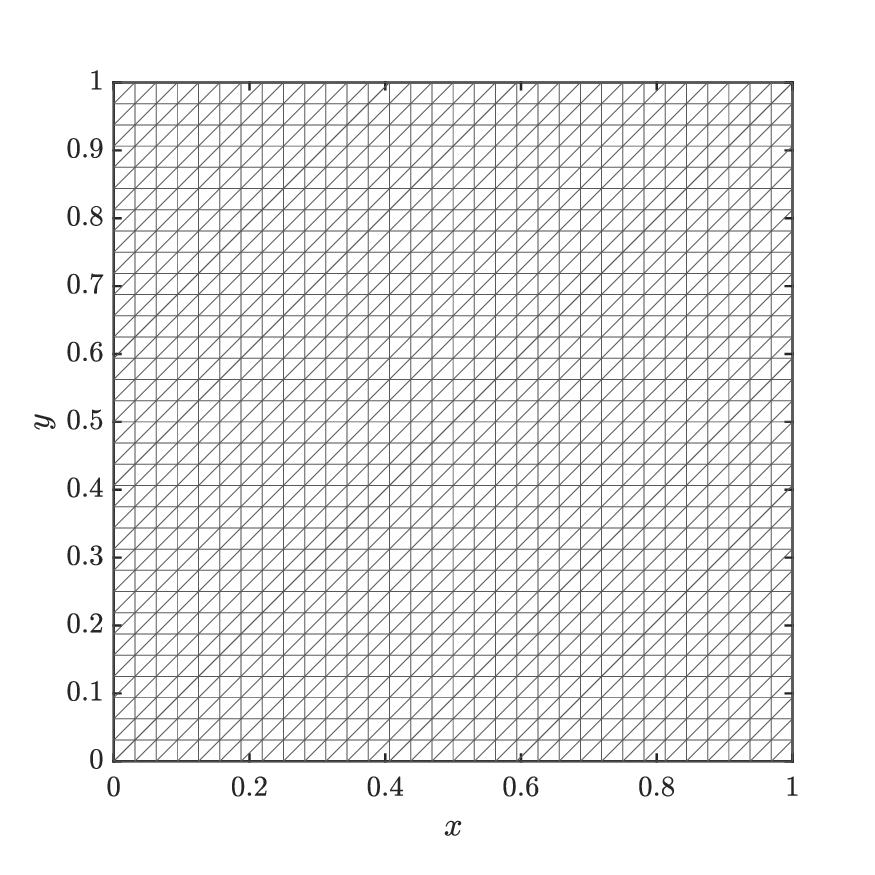}}
			\label{fig:e1mesh}
		}%
		\hfill
		\subfigure[$u_h$ of Example 5.1]{
			\includegraphics[width=0.47\textwidth,trim=34 18 24 18,clip]{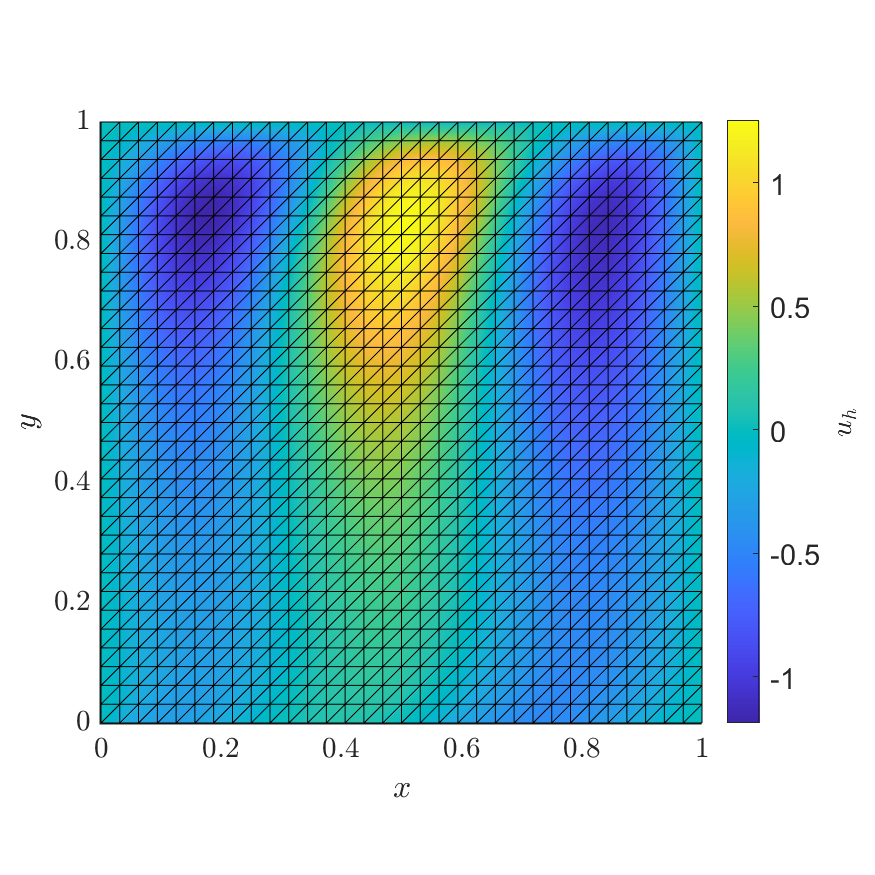}
			\label{fig:e1femnonuh}
		}
		\caption{Triangulation and numerical solution of Example 5.1.}
		\label{fig:meshanduh1}
	\end{figure}
	
	\begin{figure}[H]
		\centering
		\subfigure[$u_h$ and $\lambda_h$]{
			\includegraphics[width=0.45\textwidth]{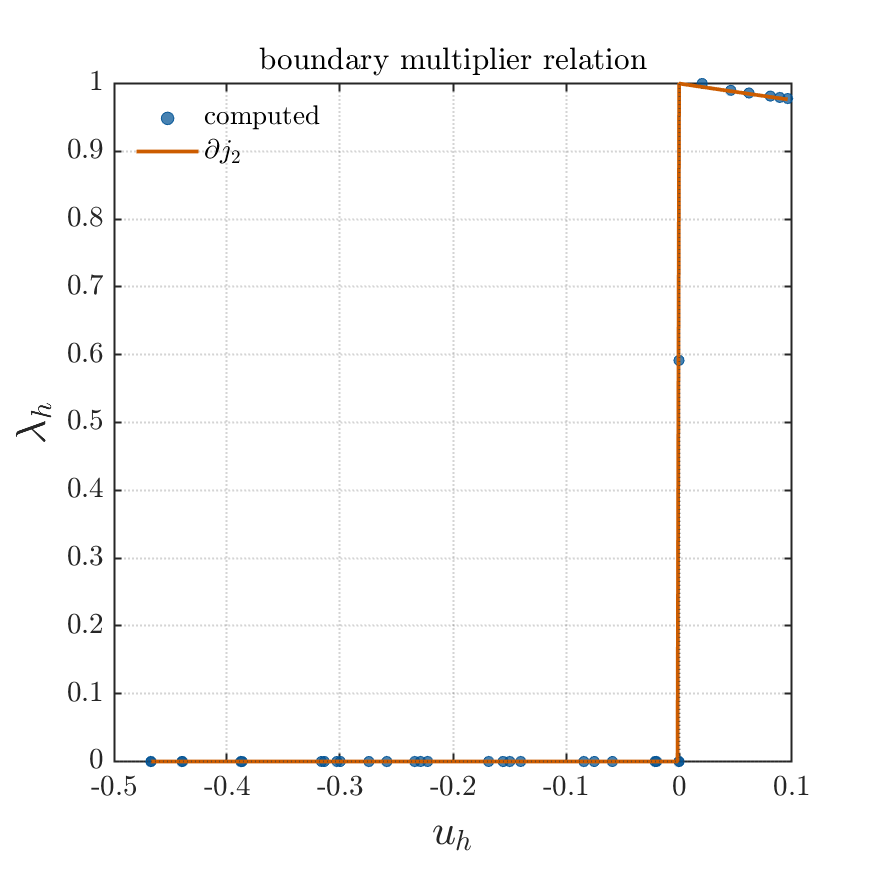}
			\label{fig:e1lambdauh}
		}%
		\hfill
		\subfigure[$u_h$ and $\mu_h$]{
			\includegraphics[width=0.45\textwidth]{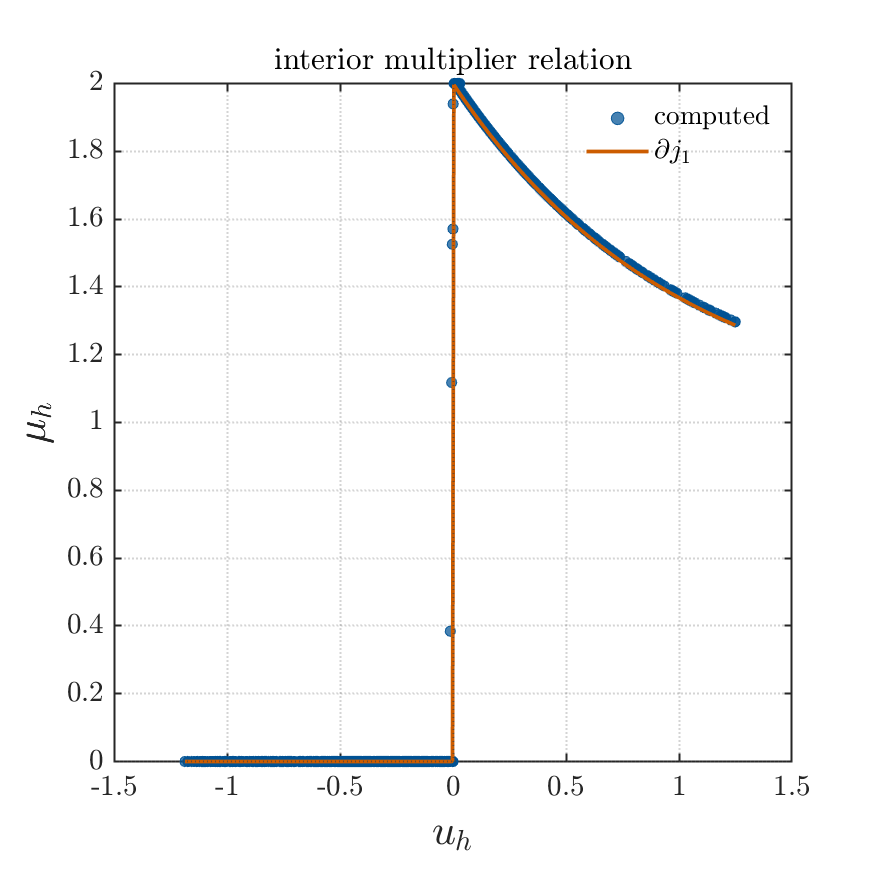}
			\label{fig:e1muuh}
		}
		
		\caption{Multiplier relations for Example 5.1.}
		\label{fig:uhlme1}
	\end{figure}
	
\end{example}

\cref{tab:example51} reports the $H^1$ error of the numerical solution and the corresponding convergence order. We observe that the convergence orders in the $H^1$ norm agree well with the theoretical results presented in \cref{th:error order}. 
\cref{fig:e1femnonuh} shows the numerical solution for $h=2^{-5}$. The observed directional behavior reflects the effect of the non-isotropic diffusion tensor. \cref{fig:uhlme1} compares the computed multipliers with the graphs of the generalized subdifferentials associated with $j_1$ and $j_2$. The numerical results are consistent with the prescribed nonsmooth relations.

To further test the numerical method, \cref{example2} considers a coefficient setting with stronger non-isotropy, where the two diagonal coefficients differ by a factor of 10. In addition, the off-diagonal coefficient contains the function $xy$, which introduces heterogeneity into the diffusion tensor. This example therefore provides a useful test for the performance of the numerical method in a more complex coefficient setting.

\begin{example}\label{example2}
	In this example, we use the same setting as in Example 5.1 except that
	\[
	(a_{ij})=
	\begin{pmatrix}
		1 & xy\\
		xy & 10
	\end{pmatrix},
	\qquad a_0=1,
	\]
	and
	\[
	\begin{aligned}
		f_0 ={}& \big(12\pi^2\sin(2\pi y)+\sin(2\pi y)-2\pi y\cos(2\pi y)\big)\sin(2\pi x)\\
		&-\big(2\pi x\sin(2\pi y)+8\pi^2xy\cos(2\pi y)\big)\cos(2\pi x).
	\end{aligned}
	\] 
	\begin{table}[htbp]
		\centering
		\caption{$H^1$ errors and experimental convergence orders for Example 5.2.}
		\label{tab:example52}
		\vspace{4pt}
		\begin{tabular}{c c c c c c}
			\toprule
			$h$ & $2^{-3}$ & $2^{-4}$ & $2^{-5}$ & $2^{-6}$ & $2^{-7}$ \\
			\midrule
			$\|u_{\rm ref}-u_h\|_{H^1(\Omega)}$ & $6.9671\mathrm{e}\text{-}01$ & $3.5778\mathrm{e}\text{-}01$ & $1.8010\mathrm{e}\text{-}01$ & $8.9772\mathrm{e}\text{-}02$ & $4.3836\mathrm{e}\text{-}02$ \\
			Order & -- & 0.9615 & 0.9902 & 1.0045 & 1.0342 \\
			\bottomrule
		\end{tabular}
	\end{table}
	
	\begin{figure}[H]
		\centering
		\subfigure[$u_h$ and $\lambda_h$]{
			\includegraphics[width=0.45\textwidth]{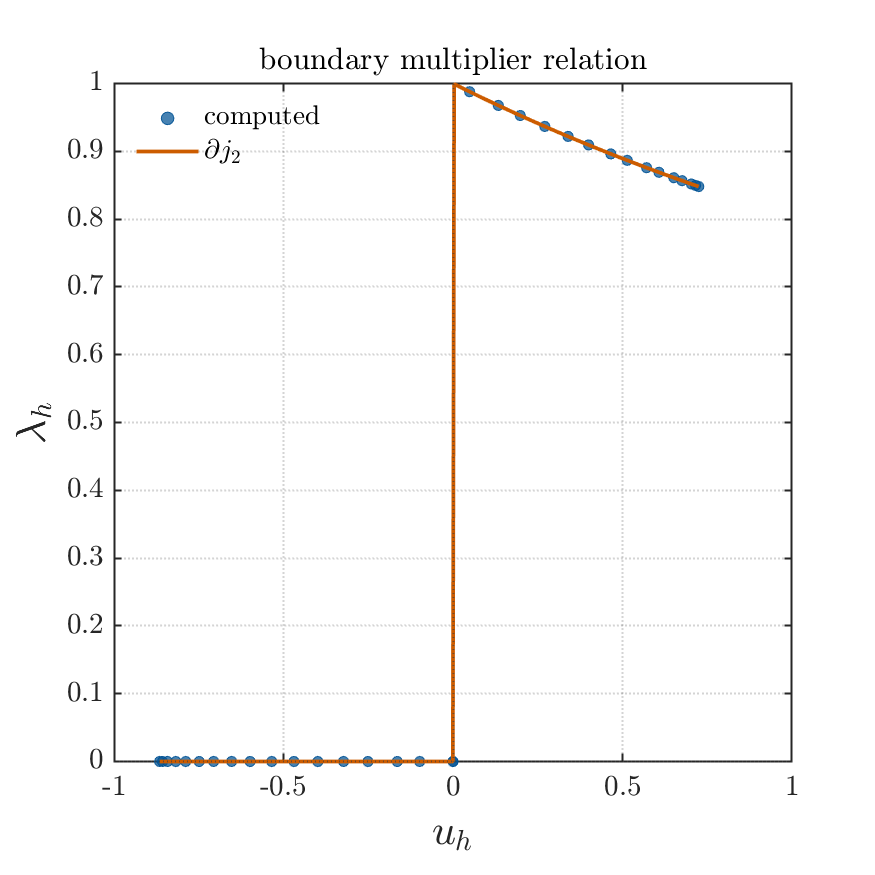}
			\label{fig:e2lambdauh}
		}%
		\hfill
		\subfigure[$u_h$ and $\mu_h$]{
			\includegraphics[width=0.45\textwidth]{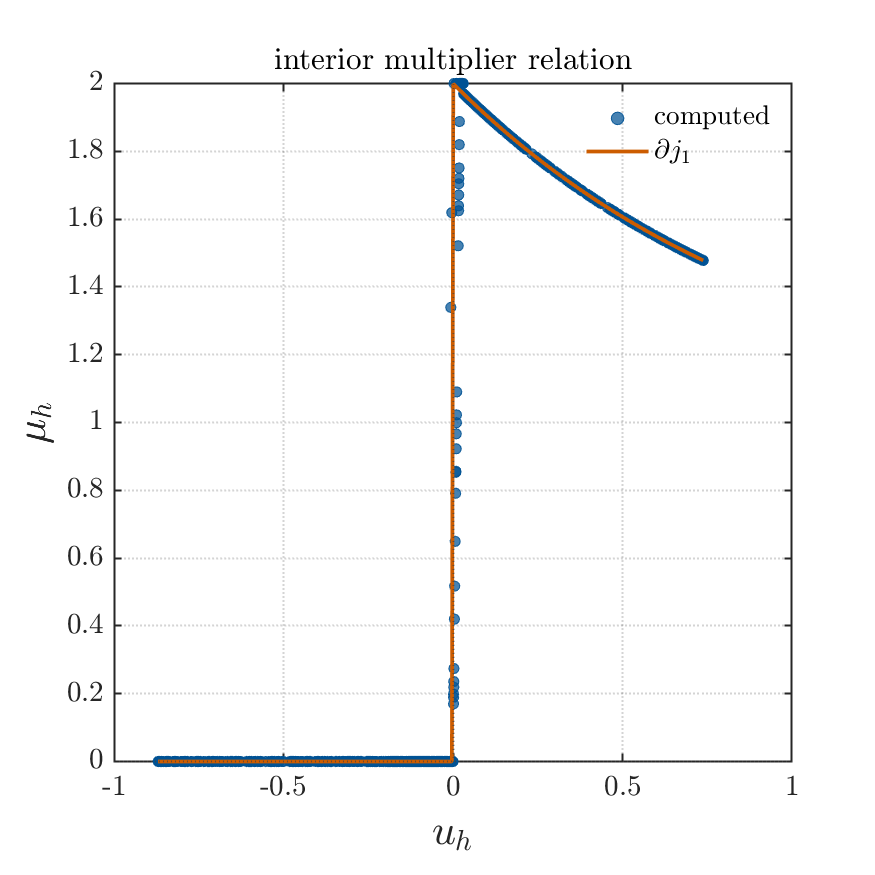}
			\label{fig:e2muuh}
		}
		\caption{Multiplier relations for Example 5.2.}
		\label{fig:uhlme2}
	\end{figure}

\end{example}

As in Example~\ref{example1}, the experimental convergence orders in the $H^1$ norm agree well with the theoretical estimate, as shown in \cref{tab:example52}. 
\cref{fig:uhlme2} further compares the computed multipliers with the corresponding generalized subdifferential graphs. The numerical results are consistent with the prescribed nonsmooth relations in the non-isotropic and heterogeneous case.

\nocite{*}
\bibliographystyle{abbrv}
\bibliography{references}

\end{document}